\newcommand{\sgn}{\operatorname{sgn}}
\theoremstyle{plain}
\newtheorem{theorem}{Theorem}[section]
\newtheorem{lemma}{Lemma}[section]
\newtheorem{corollary}{Corollary}[section]
\theoremstyle{definition}
\newtheorem{remark}{Remark}[section]
\newtheorem{definition}{Definition}[section]
\title[Approximating mixed H\"older functions]
{Approximating mixed H\"older functions \\ using random samples}
\author[N.~F.~Marshall]{Nicholas F. Marshall}
\address{Program in Applied Mathematics, Department of Mathematics, Yale University, New Haven, CT 06511,
USA} 
\email{nicholas.marshall@yale.edu}
\keywords{H\"older condition, sparse grids, randomized Kaczmarz}
\subjclass[2010]{26B35  (primary) and 42B35, 60G42 (secondary)}
\begin{document}

\begin{abstract}
Suppose $f : [0,1]^2 \rightarrow \mathbb{R}$ is a $(c,\alpha)$-mixed H\"older
function that we sample at $l$ points $X_1,\ldots,X_l$ chosen uniformly at
random from the unit square. Let the location of these points and the function
values $f(X_1),\ldots,f(X_l)$ be given. 
If $l \ge c_1 n \log^2 n$, then we can
compute an approximation $\tilde{f}$ such that
$$
\|f - \tilde{f} \|_{L^2} = \mathcal{O}(n^{-\alpha} \log^{3/2} n),
$$
with probability at least  $1 - n^{2 -c_1}$, where the implicit constant only
depends on the constants $c > 0$ and $c_1 > 0$.
\end{abstract}

\maketitle

\section{Introduction}
\subsection{Introduction} \label{intro}
A function $f :[0,1]^2 \rightarrow \mathbb{R}$ is
$(c,\alpha)$-mixed H\"older if
$$
|f(x^\prime,y) - f(x,y)| \le c |x^\prime - x|^\alpha, \qquad 
|f(x,y^\prime) - f(x,y)| \le c |y^\prime - y|^\alpha,
$$
and
$$
|f(x^\prime,y^\prime) - f(x,y^\prime) - f(x^\prime,y) + f(x,y)| \le c
(|x^\prime-x| |y^\prime -y|)^\alpha,
$$
for all $x,x^\prime,y,y^\prime \in [0,1]$. For example, if $f : [0,1]^2
\rightarrow \mathbb{R}$ satisfies 
$$
\left|\frac{\partial f}{\partial x} \right| \le c, \quad \left|\frac{\partial
f}{\partial y} \right| \le c, \quad \text{and} \quad \left| \frac{\partial
f}{\partial x \partial y} \right| \le c \quad \text{on $[0,1]^2$},
$$
then by the mean value theorem $f$ is $(c,1)$-mixed H\"older. In 1963, Smolyak
\cite{Smolyak1963} discovered a surprising approximation result for mixed
H\"older functions.
\begin{lemma}[Smolyak] \label{smolyak}
Suppose that $f:[0,1]^2 \rightarrow \mathbb{R}$ is $(c,\alpha)$-mixed H\"older.
Then
$$
f(x,y) = \sum_{k=0}^m f(x_k,y_{m-k}) - \sum_{k=1}^{m} f(x_{k-1},y_{m-k}) +
\mathcal{O} \left(m 2^{-\alpha m} \right),
$$
where $x_k$ is the center of the dyadic interval of length $2^{-k}$ that
contains $x$, and $y_j$ is the center of the dyadic interval of length
$2^{-j}$ that contains $y$.
\end{lemma}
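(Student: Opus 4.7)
The plan is to recognize the prescribed double sum as a telescoping series in disguise and to use the three mixed H\"older hypotheses to transport each difference back to the evaluation point $(x,y)$. First I would combine the two sums into
$$
S_m := f(x_0, y_m) + \sum_{k=1}^m \bigl[ f(x_k, y_{m-k}) - f(x_{k-1}, y_{m-k}) \bigr],
$$
and record the geometric facts that $|x_k - x_{k-1}| = 2^{-(k+1)}$ (both centers lie in the same dyadic interval of length $2^{-(k-1)}$) and $|y_{m-k} - y| \le 2^{-(m-k+1)}$. The product of these two quantities is $2^{-(m+2)}$, independent of $k$---this uniformity is what makes the whole argument work.

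Next, for each bracket I would apply the mixed cross-difference inequality to the rectangle with corners $(x_k, y_{m-k}), (x_{k-1}, y_{m-k}), (x_k, y), (x_{k-1}, y)$, obtaining
$$
f(x_k, y_{m-k}) - f(x_{k-1}, y_{m-k}) = f(x_k, y) - f(x_{k-1}, y) + \mathcal{O}(2^{-\alpha m}).
$$
Since the right-hand side telescopes when summed over $k = 1, \ldots, m$, this reduces $S_m$ to $f(x_0, y_m) + f(x_m, y) - f(x_0, y) + \mathcal{O}(m \cdot 2^{-\alpha m})$.

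Finally I would apply the two univariate H\"older inequalities to replace $f(x_m, y)$ by $f(x, y)$ and $f(x_0, y_m)$ by $f(x_0, y)$, each at the cost of an additional $\mathcal{O}(2^{-\alpha m})$ error. The two $f(x_0, y)$ contributions then cancel, producing $S_m = f(x, y) + \mathcal{O}(m \cdot 2^{-\alpha m})$, which is the claim.

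There is no serious conceptual obstacle: the proof is a telescoping sum together with three applications of the H\"older hypotheses. The one subtlety worth flagging is that the mixed inequality must contribute an error of size $2^{-\alpha m}$ \emph{uniformly} in $k$, so that summing $m$ such errors gives only the stated $\mathcal{O}(m \cdot 2^{-\alpha m})$ loss rather than something larger; this uniformity is exactly what the product structure $2^{-(k+1)} \cdot 2^{-(m-k+1)} = 2^{-(m+2)}$ delivers.
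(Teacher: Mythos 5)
Your proof is correct, and it takes a genuinely different route from the paper's. The paper starts from $f(x,y)$, approximates it by $f(x_m,y_m)$ with error $\mathcal{O}(2^{-\alpha m})$, expands $f(x_m,y_m)$ into a full $m\times m$ double telescoping series of cross-differences $f_k^j - f_{k-1}^j - f_k^{j-1} + f_{k-1}^{j-1}$ plus one-dimensional boundary sums, then discards the $\mathcal{O}(m)$ diagonal of terms with $j+k>m$ (their total contribution being bounded via the geometric decay $2^{-\alpha(j+k)}$) and collapses the rest back down to the Smolyak sum. You instead work \emph{forward} from the Smolyak sum itself: you peel off $f(x_0,y_m)$, rewrite the remainder as $\sum_{k=1}^m\bigl[f(x_k,y_{m-k})-f(x_{k-1},y_{m-k})\bigr]$, and observe that a single application of the mixed cross-difference inequality transports each bracket to $f(x_k,y)-f(x_{k-1},y)$ with error $\mathcal{O}(2^{-\alpha m})$ uniformly in $k$, because $|x_k-x_{k-1}|\cdot|y_{m-k}-y|\le 2^{-(k+1)}\cdot 2^{-(m-k+1)}=2^{-(m+2)}$ is independent of $k$. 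The shifted sum telescopes exactly, and two applications of the univariate H\"older condition clean up the boundary terms. Your argument uses only $m+2$ applications of the H\"older hypotheses rather than organizing a full double telescoping expansion, so it is shorter and more elementary; what the paper's version buys in exchange is a clearer picture of \emph{how} the Smolyak sum emerges from a complete hierarchical (sparse-grid) decomposition of $f(x_m,y_m)$, which is the viewpoint the rest of the paper builds on.
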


Observe that the point $(x_k,y_j)$ is the center of a dyadic rectangle
of width $2^{-k}$ and height $2^{-j}$; thus, Lemma
\ref{smolyak} is a statement about approximating mixed H\"older functions by
linear combinations of function values at the center of dyadic rectangles of
area $2^{-m}$ and $2^{-m+1}$.  

We remark that Smolyak \cite{Smolyak1963}
actually presented a general $d$-dimensional version of Lemma \ref{smolyak}, and
that the ideas of Smolyak were expanded upon by Str\"omberg
\cite{Stromberg1998}, and have been developed into a computational tool called
sparse grids, see \cite{BungartzGriebel2004}.  The proof of Lemma \ref{smolyak}
involves a telescoping series argument and is included below; throughout, we use
the notation $f \lesssim g$ when $f \le C g$ for some constant $C > 0$.

\begin{proof}[Proof of Lemma \ref{smolyak}]
Fix $(x,y) \in [0,1]^2$. For notational brevity set $f_k^j :=
f(x_k,y_j)$. First, we approximate $f(x,y)$ by the center $f_m^m$ 
of a $2^{-m}$ by $2^{-m}$ square. Clearly,
$$
|f(x,y) - f_m^m| \lesssim 2^{-\alpha m}.
$$
Expanding $f_{m}^m$ in successive telescoping series in
$\{x_k\}_{k=1}^m$ and $\{y_j\}_{j=1}^m$ gives 
$$
f_m^m = \sum_{j=1}^m \sum_{k=1}^m \left( f_k^j  - f_{k-1}^j - f_k^{j-1} +
f_{k-1}^{j-1} \right) + \sum_{l=1}^m \left( f_l^0 - f_{l-1}^0
+ f_0^l - f_0^{l-1} \right) + f_0^0.
$$
Since $f$ is $(c,\alpha)$-mixed H\"older, it follows that the terms of
the double sum satisfy
$$
\left| f_k^j  - f_{k-1}^j - f_k^{j-1} + f_{k-1}^{j-1} \right| \lesssim 2^{-\alpha(j +
k)}.
$$
Thus, we can bound the sum of terms in the double sum such that $j + k
> m$ by
$$
\sum_{j=1}^m \sum_{k=m-j+1}^m 
\left| f_k^j  - f_{k-1}^j - f_k^{j-1} + f_{k-1}^{j-1} \right|  \lesssim
\sum_{j=1}^m \sum_{k=m-j+1}^m 2^{-\alpha(j+k)} \lesssim m 2^{-\alpha m}.
$$
Removing these terms from the double sum and collapsing the
telescoping series leaves only terms $f_j^k$ such that $j+k \in \{m, m-1\}$; in
particular, we conclude that
$$
\left| f_{m}^{m} - \left( \sum_{l=0}^m f_l^{m-l} - \sum_{l=1}^{m} f_{l-1}^{m-l}
\right) \right| \lesssim m 2^{-\alpha m},
$$
which completes the proof.
\end{proof}

\begin{remark}
The proof began by approximating $f(x,y)$ to error $\mathcal{O}(2^{-\alpha m})$
by the function value at the center of the dyadic square with side length
$2^{-m}$ which contains $(x,y)$.  However, it would require $2^{2 m}$ function
values to approximate $f$ at every point in the unit square using this method.
In contrast, the telescoping argument in the proof of Lemma \ref{smolyak}
achieves an approximation error of $\mathcal{O}(m 2^{-\alpha m})$ while
only using function values at the center of dyadic rectangles of area $2^{-m}$
and $2^{-m+1}$; the total number of such rectangles is $(m+1) 2^m + m 2^{m-1}$.
\end{remark}

\subsection{Main result}
Informally speaking, Lemma \ref{smolyak} says that if
we are given a specific set of $\sim n \ln n$ samples of a $(c,\alpha)$-mixed
H\"older function $f : [0,1]^2 \rightarrow \mathbb{R}$, then we are able to
compute an approximation $\tilde{f}$ of $f$ such that
$$
\|f - \tilde{f}\|_{L^\infty} = \mathcal{O}(n^{-\alpha} \log n),
\quad \text{and} \quad
\|f - \tilde{f}\|_{L^2} = \mathcal{O}(n^{-\alpha} \log n),
$$
where the $L^2$-norm estimate follows directly from the $L^\infty$-norm
estimate.  Our main result relaxes the
sampling requirement to $\sim n \log^2 n$ random samples and achieves the same
$L^2$-norm error estimate up to $\log$ factors.

\begin{theorem} \label{thm1}
Suppose $f : [0,1]^2 \rightarrow \mathbb{R}$ is a $(c,\alpha)$-mixed H\"older
function that we sample at $l$ points $X_1,\ldots,X_l$ chosen uniformly at
random from the unit square. Let the location of these points and the function
values $f(X_1),\ldots,f(X_l)$ be given. 
If $l \ge c_1 n \log^2 n$, then we can
compute an approximation $\tilde{f}$ such that
$$
\|f - \tilde{f} \|_{L^2} = \mathcal{O}(n^{-\alpha} \log^{3/2} n),
$$
with probability at least  $1 - n^{2-c_1}$, where the implicit constant only
depends on the constants $c > 0$ and $c_1 > 0$.
\end{theorem}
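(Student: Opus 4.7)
The plan is to reduce Theorem \ref{thm1} to Lemma \ref{smolyak} by using the random samples to estimate the values of $f$ at the $O(n \log n)$ sparse grid points that appear in the Smolyak combination. First I would set $m = \lceil \log_2 n \rceil$, so the rectangles in the Smolyak formula all have area $2^{-m}$ or $2^{-m+1}$; with $l \ge c_1 n \log^2 n$ uniform samples in $[0,1]^2$, the expected number of samples in any one of these rectangles is at least $c_1 \log^2 n$, so each one has plenty of data. I would then parameterize the target approximant in the $O(n \log n)$-dimensional sparse grid space $V_m$ as $\sum_R c_R \phi_R$ in a hierarchical basis $\{\phi_R\}$ indexed by the Smolyak rectangles, view the constraints $\sum_R c_R \phi_R(X_i) = f(X_i)$, $i = 1, \ldots, l$, as an overdetermined linear system in the coefficients $c_R$, and fit it either by least squares or, as the abstract's keyword suggests, by randomized Kaczmarz, calling the resulting function $\tilde f$.

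Once $\tilde f$ is defined, I would split the $L^2$ error into a Smolyak \emph{bias}, of size $O(m 2^{-\alpha m}) = O(n^{-\alpha} \log n)$ by Lemma \ref{smolyak}, and a \emph{statistical} part coming from the discrepancy between the fitted and the exact Smolyak coefficients. For the statistical part, each coefficient at a Smolyak rectangle of area $\sim 2^{-m}$ is estimated from $\sim c_1 \log^2 n$ samples, so a Bernstein/Chernoff bound gives a per-coefficient deviation of order $n^{-\alpha}$ with failure probability $O(n^{-c_1})$; summing over the $O(n \log n)$ hierarchical coefficients costs at most an extra factor of $\sqrt{\log n}$ by $L^2$-orthogonality, giving the advertised $O(n^{-\alpha} \log^{3/2} n)$ in $L^2$, while a union bound over the $\le n^2$ relevant deviation events yields the probability bound $1 - n^{2-c_1}$.

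The main obstacle is that naive sample-averaging of $f(X_i)$ inside a Smolyak rectangle $R$ is insufficient: for a strongly elongated rectangle of dimensions $2^{-k} \times 2^{-(m-k)}$ with $k$ close to $0$ or $m$, the per-coordinate H\"older condition only bounds $|f(X_i) - f(\text{center of }R)|$ by $O(1)$, so such an average cannot recover the target value to accuracy $O(n^{-\alpha})$. The full mixed H\"older hypothesis has to be invoked through the second-order difference
$$
f(x',y') - f(x,y') - f(x',y) + f(x,y),
$$
which is $O(2^{-\alpha m})$ uniformly over pairs of points in any Smolyak rectangle regardless of aspect ratio. Working in the hierarchical basis (equivalently, running randomized Kaczmarz on the appropriate linear system) is what makes this cancellation happen automatically; the core technical bottleneck is then showing that the $l \times O(n \log n)$ random design matrix has smallest singular value bounded below with probability $1 - n^{2-c_1}$, so that $O(l)$ Kaczmarz iterations suffice to drive the residual below $n^{-\alpha} \log^{3/2} n$.
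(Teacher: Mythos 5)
Your high-level architecture matches the paper's: both work in a hierarchical basis indexed by Smolyak rectangles, set up a linear system in those coefficients, solve by randomized Kaczmarz, and split the final error into a Smolyak bias term $\mathcal{O}(m 2^{-\alpha m})$ plus a statistical term from the iterative solve. But there are two real gaps, one of which is the crux of the whole argument.

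First, the per-coefficient Bernstein/Chernoff argument does not work as stated. The coefficients are not estimated independently from the samples landing in the supporting rectangle of each basis function: the hierarchical basis functions overlap across scales, and the Kaczmarz (or least-squares) iterates couple them through the design matrix. There is no step in which a coefficient is an average of $\sim c_1\log^2 n$ samples to which you could apply a per-coordinate concentration bound; consequently the ``union bound over $\le n^2$ events'' has nothing to union over. The paper gets the $1-n^{2-c_1}$ probability entirely differently: it applies Markov's inequality to $\|v_l-w\|_{\ell^2}^2$ using the expected-squared-error bound from the Strohmer--Vershynin Kaczmarz lemma.

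Second, and more importantly, you identify the ``core technical bottleneck'' as bounding from below the smallest singular value of the $l\times\mathcal{O}(n\log n)$ random design matrix. The paper deliberately avoids this problem, and that avoidance is its central idea. It considers the full $2^{2m}\times(m+2)2^{m-1}$ matrix $A$ whose rows are the embeddings of \emph{all} dyadic grid cells, and proves via a martingale argument (the partial sums $\sum_{k\le r}\xi_k v_{\beta_k}$ are a martingale in the dyadic filtration, so cross terms vanish) that
$$
\mathbb{E}\,|\langle\Psi(X),v\rangle|^2 = 2^{-m}\quad\text{for every unit }v,
$$
hence every singular value of $A$ equals $2^{m/2}$ and the Kaczmarz condition number $\kappa^2=(m+2)2^{m-1}$ is known exactly. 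Sampling $X_i$ uniformly from $[0,1]^2$ is then literally sampling rows of $A$ uniformly, which is exactly what the Strohmer--Vershynin theorem requires; the convergence rate is governed by $\kappa(A)$ and no spectral statement about the random $l$-row submatrix is ever needed. Without this observation, or something that plays the same role, you are left trying to prove a restricted-isometry-type bound for a random submatrix of Haar-type indicators, which is substantially harder and not what the $\log^2 n$ sample budget obviously buys you. A remaining piece you also need (and the paper supplies in Lemma \ref{lem2}) is the explicit construction of the target vector $w$ with $|f(x)-\langle\Psi(x),w\rangle|\lesssim m2^{-\alpha m}$; you gesture at this via ``working in the hierarchical basis makes the cancellation happen automatically,'' but that step requires an actual telescoping computation, not an appeal to Kaczmarz.
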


When $\alpha > 1/2$ the theorem implies that we can integrate mixed
H\"older functions  on the unit square with an error rate that is better than
the Monte Carlo rate of $\mathcal{O}(n^{-1/2})$ with high probability. 

\begin{corollary} \label{cor1}
Under the assumptions of Theorem \ref{thm1}, if $l \ge c_1 n \log^2 n$, then we
can compute an approximation $I$ of the integral of $f$ on $[0,1]^2$ such that
$$
\int_{[0,1]^2} f(x) dx = I + \mathcal{O}(n^{-\alpha} \log^{3/2} n),
$$
with probability at least  $1 - n^{2 -c_1}$. 
\end{corollary}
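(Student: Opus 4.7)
The plan is to let $I$ be the integral of the approximation $\tilde f$ produced by Theorem \ref{thm1}, and then compare $I$ to the true integral of $f$ using the $L^2$ error estimate. That is, set $I := \int_{[0,1]^2} \tilde f(x)\, dx$, assuming the approximation $\tilde f$ is of an explicit form (a linear combination of indicator functions of dyadic rectangles, say, in the spirit of Smolyak's construction) whose integral can be computed in closed form from the data.

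The key inequality I would invoke is Cauchy--Schwarz on the unit square, using that its Lebesgue measure is $1$:
$$
\left| \int_{[0,1]^2} f(x)\, dx - I \right|
= \left| \int_{[0,1]^2} (f - \tilde f)(x)\, dx \right|
\le \|f - \tilde f\|_{L^2([0,1]^2)} \cdot \|1\|_{L^2([0,1]^2)}
= \|f - \tilde f\|_{L^2}.
$$
By Theorem \ref{thm1}, on the event of probability at least $1 - n^{2-c_1}$ on which the $L^2$ estimate holds, the right-hand side is $\mathcal{O}(n^{-\alpha} \log^{3/2} n)$, which is exactly the claimed bound.

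There is essentially no obstacle here beyond verifying that $\tilde f$ is integrable and that its integral is computable from the given samples; both are immediate from the explicit construction used to prove Theorem \ref{thm1}. The probabilistic event is the same event on which Theorem \ref{thm1} succeeds, so no additional union bound or concentration argument is required.
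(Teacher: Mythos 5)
Your proposal matches the paper's proof exactly: the paper states that Corollary~\ref{cor1} follows immediately from the $L^2$ estimate of Theorem~\ref{thm1} together with the Cauchy--Schwarz inequality, which is precisely the argument you give. The only detail the paper adds elsewhere (in Remark~\ref{rmk1}) is that $\int_{[0,1]^2} \tilde f$ can be computed in $\mathcal{O}(n)$ operations, which you correctly note is immediate from the explicit form of $\tilde f$.
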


The proof of this corollary follows immediately from the $L^2$-norm estimate
from Theorem \ref{thm1} and the Cauchy Schwarz inequality. 

\begin{remark} \label{rmk1}
The computational cost of computing $\tilde{f}$ is $\mathcal{O}(n
\log^3 n)$ operations of pre-computation, and then $\mathcal{O}(\log n)$
operations for each point evaluation. Furthermore, after pre-computation we can
compute the integral of $\tilde{f}$ on the unit square in $\mathcal{O}(n)$
operations.  The construction of $\tilde{f}$ is described in \S \ref{proofmain}.
\end{remark}

\begin{remark} \label{spin}
An advantage of using random samples and Theorem \ref{thm1} 
to approximate a mixed H\"older function over using samples 
at the center of dyadic rectangles and Lemma \ref{smolyak} is the ability
to perform spin cycling. For simplicity of exposition, assume that $f :
[0,1]^2 \rightarrow \mathbb{R}$ is a mixed H\"older function on the torus. Let
$X_1,\ldots,X_l$ be chosen uniformly at random from $[0,1]^2$, and let the
function values $f(X_1),\ldots,f(X_l)$ be given. By Theorem \ref{thm1} we can
compute an approximation $\tilde{f}$ of the function $f$; however, as described
in \S \ref{proofmain} the computation of $\tilde{f}$ is dependent on the dyadic
decomposition of $[0,1]^2$, and this dependence will create artifacts. We call
the following method of removing these artifacts spin cycling.

Let $\zeta \in [0,1]^2$ be given, and define $f_\zeta(x) = f(x - \zeta)$ where
addition is performed on the torus. By considering the function values
$f(X_1),\ldots,f(X_l)$ as values of the function $f_\zeta$ at the uniformly
random sample of points $X_1+\zeta,\ldots,X_l+\zeta$, we can use Theorem
\ref{thm1} to compute an approximation $\tilde{f}_\zeta$ of the function
$f_\zeta$. It follows that $\tilde{f}_\zeta(x + \zeta)$ is an approximation of
$f$ with the same accuracy guarantees as $\tilde{f}$.  However, the shift
$\zeta$ has changed the relation of the function values to the dyadic
decomposition of $[0,1]^2$, and thus has changed the resulting artifacts. In
general, we can consider a sequence of shifts $\zeta_1,\ldots,\zeta_q \in
[0,1]^2$ and define
$$
\bar{f}(x) = \frac{1}{q} \sum_{k=1}^q \tilde{f}_{\zeta_k}(x + \zeta_k)
\quad \text{for} \quad x \in [0,1]^2,
$$
where $\tilde{f}_{\zeta_j}$ is the approximation of the function $f_{\zeta_j}$
computed via Theorem \ref{thm1} using the shift operation described
above. We say that $\bar{f}$ is an approximation via Theorem \ref{thm1} with
$q$ spin cycles. In \S \ref{example} we provide empirical evidence that spin
cycling removes artifacts. We note that when $l \ge c_1 n \log^2 n$ and $c_1 > 2
+ \log(q)/\log(n)$, it follows that the accuracy claims of Theorem \ref{thm1} hold for 
all function $\tilde{f}_{\zeta_j}$ for $j = 1,\ldots,q$ with high probability.
The assumption that $f$ is mixed H\"older on the torus can be relaxed by
handling the boundaries appropriately. We emphasize that spin cycling is not
possible when using a fixed sample of points at the center of dyadic rectangles
and Lemma \ref{smolyak} as any shift moves the points away from the center of
dyadic rectangles, which is prohibitive for using Lemma \ref{smolyak}.
\end{remark}

\section{Preliminaries}
\subsection{Notation}
Let $\mathcal{D}$ denote the set of dyadic intervals in $[0,1]$; more precisely,
$$
\mathcal{D} := \left\{ \left[ (j-1)2^{-k}, j 2^{-k} \right) \subset \mathbb{R} :
k \in \mathbb{Z}_{\ge 0} \wedge j \in \{1,\ldots,2^k\} \right\}.
$$
We say that $R = I \times J$ is a dyadic rectangle in the unit square
if $I,J \in \mathcal{D}$. The number of dyadic rectangles in the unit square of
area $2^{-m}$ is
$$
(m+1) 2^m = \# \left\{ R = I \times J : |R| = 2^{-m} \wedge I,J \in \mathcal{D}
\right\}.
$$
In particular, for each $k = 0,\ldots,m$ there are $2^m$ distinct dyadic
rectangles of width $2^{-k}$ and height $2^{m-k}$, which are disjoint and cover
the unit square. We illustrate the dyadic rectangles in the unit square of area
at least $2^{-3}$ in Figure \ref{fig1}.
\begin{figure}[h!]
\includegraphics[width=.4\textwidth]{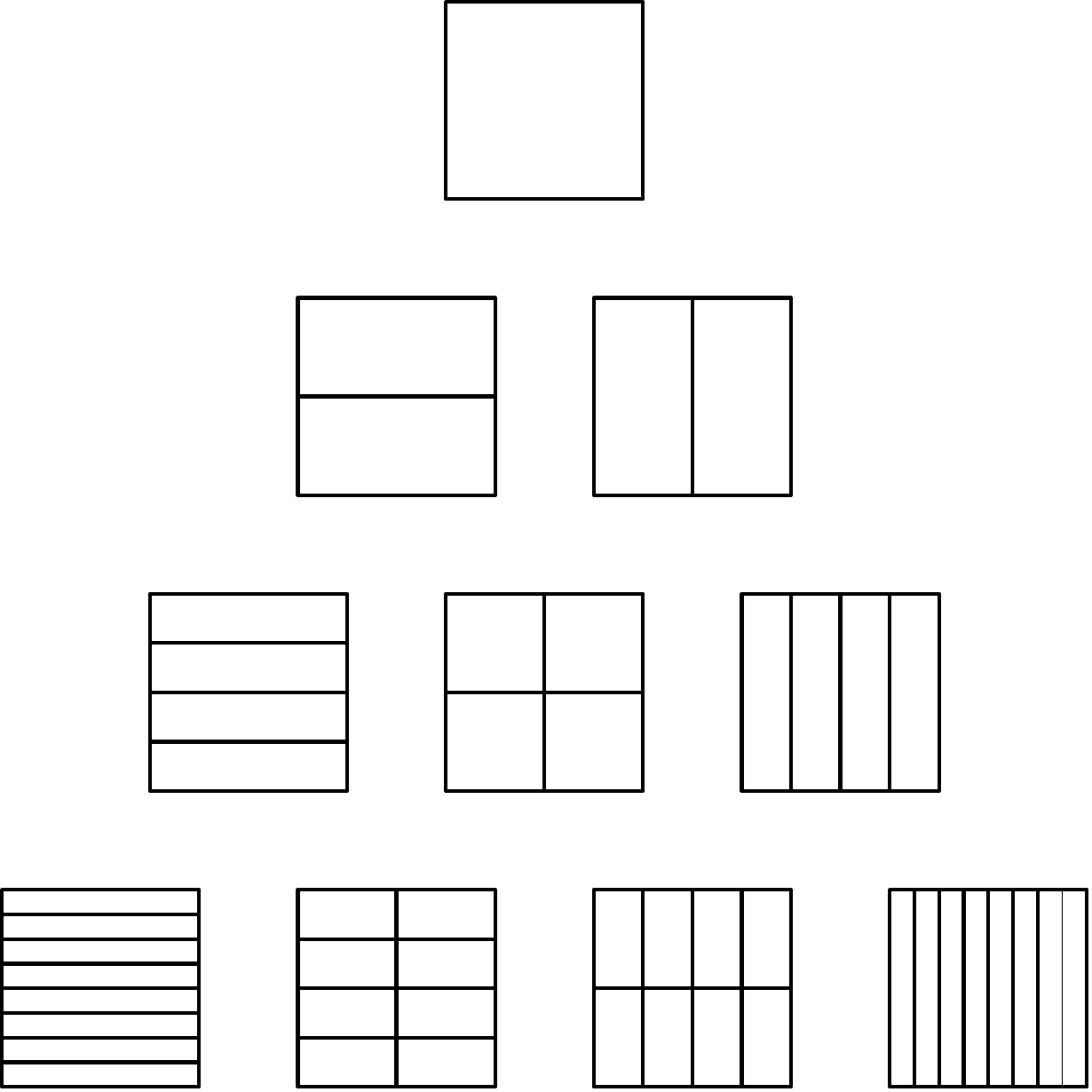}
\caption{The dyadic rectangles of area at least $2^{-3}$ in the unit square.}
\label{fig1}
\end{figure}

Recall that Lemma \ref{smolyak} approximates the value $f(x)$ of a
mixed H\"older function by a linear combination of the function values at the
centers of dyadic rectangles of area $2^{-m}$ and $2^{-m+1}$ that contain the
point $x$.  Thus, with respect to the illustration in Figure \ref{fig1}, the
approximation formula of Lemma \ref{smolyak} consists of adding the function
values at the center of the dyadic rectangles in the lowest row which contain
$x$, and subtracting the function values at the center of the dyadic rectangles
in the second lowest row which contain $x$.

\subsection{Randomized Kaczmarz} In addition to properties of dyadic rectangles,
we will use a result of Strohmer and Vershynin \cite{StrohmerVershynin2009}
regarding the convergence of a randomized Kaczmarz algorithm. Specifically,
Strohmer and Vershynin show that a specific randomized Kaczmarz algorithm
converges exponentially fast at a rate that only depends on how well the matrix
is conditioned. The following lemma is a special case of their result, which
will be sufficient for our purposes.

\begin{lemma}[Strohmer, Vershynin] \label{randomkaczmarz}
Let $A$ be an $N \times n$ matrix where $N \ge n$ whose rows are of equal
magnitude, and let $A w = b$ be a consistent linear system of equations. Suppose
that $l$ indices $I_1,\ldots,I_l$ are chosen uniformly at random from
$\{1,\ldots,N\}$.  Let an initial guess at the solution $v_0$ be given. For $k =
1,\ldots,l$ define
$$
v_k := v_{k-1} +  \frac{b_{I_k} -\langle a_{I_k}, v_{k-1} \rangle
}{\|a_{I_k}\|_{\ell^2}^2}
a_{I_k}, 
$$
where $a_j$ denotes the $j$-th row of $A$, and $b_j$ denotes the $j$-th entry of
$b$. Then
$$
\mathbb{E} \|v_k-w\|^2_{\ell^2} \le (1 - \kappa^{-2})^k \|v_0 -
w\|_{\ell^2}^2,
$$
for $k = 1,\ldots,l$, where $\kappa^2 := \sum_{j=1}^n \sigma_j^2/\sigma_n^2$
and $\sigma_1,\ldots,\sigma_n$ are the singular values of $A$.
\end{lemma}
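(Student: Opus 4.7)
The plan is to exploit the fact that the Kaczmarz update is, by its explicit formula, the orthogonal projection of $v_{k-1}$ onto the affine hyperplane $H_j := \{v \in \mathbb{R}^n : \langle a_j, v\rangle = b_j\}$ at the random index $j = I_k$. Since the system is consistent, $w \in H_{I_k}$; by construction $v_k \in H_{I_k}$ as well, so $v_k - w$ is orthogonal to $a_{I_k}$. Meanwhile $v_k - v_{k-1}$ is a scalar multiple of $a_{I_k}$. Applying the Pythagorean theorem to the decomposition $v_{k-1} - w = (v_{k-1} - v_k) + (v_k - w)$ and then using $b_{I_k} = \langle a_{I_k}, w\rangle$ in the update formula produces the deterministic one-step identity
$$
\|v_k - w\|_{\ell^2}^2 = \|v_{k-1} - w\|_{\ell^2}^2 - \frac{|\langle a_{I_k}, v_{k-1} - w\rangle|^2}{\|a_{I_k}\|_{\ell^2}^2}.
$$

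The second step is to take the conditional expectation of this identity over the uniformly random index $I_k$. The equal-row-magnitude hypothesis $\|a_j\|_{\ell^2}^2 = \rho$ makes the denominator independent of $j$, so the subtracted quantity has conditional expectation
$$
\frac{1}{N\rho}\sum_{j=1}^N |\langle a_j, v_{k-1} - w\rangle|^2 = \frac{\|A(v_{k-1} - w)\|_{\ell^2}^2}{N\rho}.
$$
Recognising $N\rho = \|A\|_F^2 = \sum_{j=1}^n \sigma_j^2$ and applying the singular-value lower bound $\|Ax\|_{\ell^2}^2 \ge \sigma_n^2 \|x\|_{\ell^2}^2$ with $x = v_{k-1} - w$ yields the per-step contraction
$$
\mathbb{E}\bigl[\|v_k - w\|^2_{\ell^2} \mid v_{k-1}\bigr] \le \left(1 - \frac{\sigma_n^2}{\sum_{j=1}^n \sigma_j^2}\right) \|v_{k-1} - w\|^2_{\ell^2} = (1 - \kappa^{-2}) \|v_{k-1} - w\|^2_{\ell^2}.
$$
Iterating this inequality $k$ times via the tower property of conditional expectation produces the desired bound.

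The main delicate point, and the only place where the hypotheses are really used beyond the geometric identity, is ensuring the lower bound $\|Ax\|_{\ell^2}^2 \ge \sigma_n^2\|x\|_{\ell^2}^2$ applies to the relevant vectors. It holds for every $x \in \mathbb{R}^n$ precisely when $A$ has rank $n$, which in the $N \ge n$ setting is forced whenever $\sigma_n > 0$; this in turn is implicit in the conclusion, since $\kappa^{-2}$ is only meaningful when $\sigma_n > 0$ (otherwise the right-hand side equals the initial error and the statement is vacuous). Thus in the regime where the theorem gives nontrivial information, the singular-value inequality applies unconditionally to $x = v_{k-1} - w$, and the iteration closes. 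A minor bookkeeping item worth highlighting is the use of the identity $b_{I_k} - \langle a_{I_k}, v_{k-1}\rangle = \langle a_{I_k}, w - v_{k-1}\rangle$, which is what lets us rewrite the residual from the update formula in terms of the error vector $v_{k-1} - w$ and thereby expose the quadratic form $\|A(v_{k-1}-w)\|^2$ after averaging.
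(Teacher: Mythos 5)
Your proof is correct and is essentially the original Strohmer--Vershynin argument: the projection identity, averaging over the uniformly chosen row (where equal row norms make uniform sampling coincide with their norm-weighted sampling), the identification $N\rho=\|A\|_F^2$, the bound $\|Ax\|_{\ell^2}^2\ge\sigma_n^2\|x\|_{\ell^2}^2$, and the tower property. The paper states this lemma without proof, citing \cite{StrohmerVershynin2009}, so there is no internal argument to compare against; your only (harmless) imprecision is that $\|Ax\|_{\ell^2}^2\ge\sigma_n^2\|x\|_{\ell^2}^2$ holds for every $x$ regardless of rank, the case $\sigma_n=0$ being trivial.
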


The rate of convergence of the algorithm is determined by the constant
$\kappa$, which only depends on the singular values of the matrix $A$. This
constant $\kappa$ can be viewed as a type of condition number for the matrix
$A$, and can be equivalently defined as the Frobenius norm of $A$ multiplied by
the operator norm of the left inverse of $A$.  We remark that the convergence
of the randomized Kaczmarz algorithm for inconsistent linear systems $A w
\approx b + \varepsilon$ is analyzed by Needell \cite{Needell2010}.  In the
proof of the main result we use Lemma \ref{randomkaczmarz} in combination with a
modified version of the error analysis in \cite{Needell2010}.

\subsection{Organization}
The remainder of the paper consists of the proof of Theorem \ref{thm1} in \S
\ref{proofmain} followed by discussion in \S \ref{discussion}. The proof of
Theorem \ref{thm1} is organized as follows.  In \S \ref{embed}, we define an
embedding of the points in the unit square into a larger finite dimensional
vector space.  In \S \ref{martingale}, we show that inner products of vectors
with the defined embedding coordinates have a martingale interpretation. In \S
\ref{functional}, we show that mixed H\"older functions can be approximated by
linear functionals in the embedding coordinates.  In \S \ref{random}, we show
that the randomized Kaczmarz algorithm can be used to solve a specifically
constructed system. In \S \ref{complete}, we use the developed tools to complete
the proof of Theorem \ref{thm1}. Finally, in \S \ref{proofrmk} we prove the
computational cost claims of Remark \ref{rmk1}.

\section{Proof of Theorem \ref{thm1}} 
\label{proofmain}
\subsection{Embedding points} \label{embed}
Recall, that there are $m 2^{m-1}$ dyadic rectangles of area $2^{-m+1}$ in the
unit square $[0,1]^2$. Let 
$$
T_1,\ldots,T_{m 2^{m-1}}
$$
be an enumeration of these rectangles such that the rectangles 
$$
T_{k 2^{m-1}+1},\ldots,T_{(k+1) 2^{m-1}}
$$
have width $2^{-k}$ and height $2^{k-m+1}$ for $k = 0,\ldots,m-1$. Let $T^+_j$
and $T^-_j$ denote the left and right halves of $T_j$, respectively.
Furthermore, let
$$
R_1,\ldots,R_{2^m}
$$
be an enumeration of the dyadic rectangles of width $1$ and height $2^{-m}$. 

\begin{definition} \label{def1}
We define an embedding $\Psi : [0,1]^2 \rightarrow \mathbb{R}^{(m+2) 2^{m-1}}$
entry-wise by
$$
\Psi_j(x) = \chi_{R_j}(x) \quad \text{for} \quad j = 1,\ldots,2^{m},
$$
and
$$
\Psi_{2^m +j}(x) = \frac{1}{\sqrt{2}}\left( \chi_{T_j^+}(x) - \chi_{T_j^-}(x)
\right),
\quad \text{for} \quad j = 1,\ldots,m 2^{m-1},
$$
where $\chi_{R}$ denotes the indicator function for the rectangle $R$.
\end{definition}

Fix $x \in [0,1]^2$, and let $\beta_0$ be the index of the dyadic rectangle
$R_{\beta_0}$ of width $1$ and height $2^{-m}$ that contains $x$. Then, for
$k=1,\ldots,m$, let $\beta_k - 2^m$ be the index of the dyadic rectangle
$T_{\beta_k - 2^m}$ of width $2^{-k+1}$ and height $2^{k-m}$ that contains $x$.
Set $\xi_0 := \Psi_{\beta_0}(x) = 1$, and 
$$
\xi_k := \Psi_{\beta_k}(x) = \frac{1}{\sqrt{2}} \left( \chi_{T^+_{\beta_k -
2^m}}(x) - \chi_{T^-_{\beta_k - 2^m}}(x) \right), \quad \text{for} \quad k =
1,\ldots,m,
$$
such that $\xi_k$ is $+1/\sqrt{2}$ or $-1/\sqrt{2}$ depending on if $x$ is
contained in the left or right half of $T_{\beta_k-2^m}$, respectively. Then, if
$v \in \mathbb{R}^{(m+2)2^{m-1}}$ we have
$$
\langle \Psi(x), v \rangle = \sum_{k=0}^m \xi_k v_{\beta_k},
$$
where $\langle \cdot , \cdot \rangle$ denotes the $(m+2)2^{m-1}$-dimensional
Euclidean inner product. In the following section we show that partial sums of
this inner product can be interpreted as martingales.

\subsection{Martingale interpretation} \label{martingale}
Suppose that $x \in [0,1]^2$ is chosen uniformly at random, and let the
indices $\beta_0,\ldots,\beta_m$ and the scalars $\xi_0,\ldots,\xi_m$ be defined
as above.  Let $v \in \mathbb{R}^{(m+2) 2^{m-1}}$ be a fixed unit vector. We
define the partial sum $Y_r$ by
$$
Y_r = \sum_{k=0}^{r} \xi_k v_{\beta_k}, \quad \text{for} \quad
r = 0,\ldots,m.
$$
We assert that $\{Y_r\}_{k=0}^m$ is a martingale with respect to
$\{\beta_0,\xi_1,\ldots,\xi_m\}$, that is, 
$$
\mathbb{E} \left( Y_{k+1} \big| \beta_0,\xi_1,\ldots,\xi_k \right) = Y_k,
\quad \text{for} \quad k = 0,\ldots,m-1.
$$
Indeed, this martingale property can be seen by interpreting the partial sums
from a geometric perspective. Recall that $\beta_0$ determines the dyadic
rectangle $R_{\beta_0}$ of width $1$ and height $2^{-m}$ that contains $x$.
Therefore, $\beta_0$ determines the dyadic rectangle $T_{\beta_1 - 2^m}$ of
width $1$ and height $2^{-m+1}$ that contains $x$. However, $\beta_0$ provides
no information about $\xi_1 = \pm 1/\sqrt{2}$, which is positive or negative
depending on if the point $x$ is in the left or right side of $T_{\beta_1-2^m}$,
respectively. It follows that
$$
\mathbb{E} \left( Y_1 \big| \beta_0 \right) = \frac{1}{2} \left( 
v_{\beta_0} + \frac{1}{\sqrt{2}} v_{\beta_1}  \right) + \frac{1}{2} \left(
v_{\beta_0} - \frac{1}{\sqrt{2}} v_{\beta_1} \right) = Y_0.
$$
More generally, $\beta_k$ and $\xi_k$ determine $\beta_{k+1}$
since together $\beta_k$ and $\xi_k$ determine the dyadic rectangle $T_{\beta_k
- 2^m}^{\sgn \xi_k}$ of width $2^{-k}$ and height $2^{m-k}$, which contains $x$.
This, in turn, determines the rectangle $T_{\beta_{k+1}-2^m}$ of width $2^{-k}$
and height $2^{k-m+1}$ which contains $x$, but provides no information about
which side (left or right) of this rectangle the point $x$ is contained in, that
is to say, no information about $\xi_{k+1}$.  Hence
$$
\mathbb{E} \left( Y_{k+1} \big| \beta_0,\xi_1,\ldots,\xi_k \right) = 
\frac{1}{2} \left( Y_k + \frac{1}{\sqrt{2}} v_{\beta_{k+1}} \right)
+
\frac{1}{2} \left( Y_k - \frac{1}{\sqrt{2}} v_{\beta_{k+1}} \right)
= Y_k.
$$
This martingale property of the partial sums has several useful consequences.

\begin{lemma} \label{mart}
Suppose that $X$ is chosen uniformly at random from the unit square, and set $Y
= \Psi(X)$. Let $v \in \mathbb{R}^{(m+2) 2^{m-1}}$ be a fixed vector of unit
length. Then,
$$
\mathbb{E} |\langle Y, v \rangle|^2 = 2^{-m}. 
$$
\end{lemma}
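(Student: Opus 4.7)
The plan is to leverage the martingale structure of $Y_r = \sum_{k=0}^r \xi_k v_{\beta_k}$ just established, by decomposing $\langle Y, v \rangle = Y_m$ into its martingale differences. Writing $D_k := Y_k - Y_{k-1} = \xi_k v_{\beta_k}$ for $k \ge 1$, the martingale property ensures $\mathbb{E}(D_k \mid \beta_0, \xi_1, \ldots, \xi_{k-1}) = 0$, so the $D_k$ are pairwise uncorrelated and also uncorrelated with $Y_0$. Taking the square of $Y_m = Y_0 + \sum_{k=1}^m D_k$ and expectations therefore gives the Pythagorean identity
$$
\mathbb{E} |\langle Y, v \rangle|^2 = \mathbb{E} |Y_0|^2 + \sum_{k=1}^m \mathbb{E} |D_k|^2,
$$
which reduces the problem to computing each term separately.

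Next I would compute each term by exploiting the fact that the dyadic rectangles at each scale partition $[0,1]^2$ into pieces of equal area. Since the rectangles $R_1, \ldots, R_{2^m}$ all have area $2^{-m}$, the index $\beta_0$ is uniform on $\{1, \ldots, 2^m\}$, so $\mathbb{E}|Y_0|^2 = \mathbb{E} v_{\beta_0}^2 = 2^{-m} \sum_{j=1}^{2^m} v_j^2$. For $k \ge 1$, the scalar $\xi_k$ is deterministically $\pm 1/\sqrt{2}$, so $\xi_k^2 = 1/2$, and the index $\beta_k$ ranges uniformly over the block $B_k$ of $2^{m-1}$ indices corresponding to the $2^{m-1}$ rectangles of area $2^{-m+1}$ at that scale. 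This gives
$$
\mathbb{E} |D_k|^2 = \mathbb{E}\bigl(\xi_k^2 v_{\beta_k}^2\bigr) = \tfrac{1}{2} \cdot 2^{-m+1} \sum_{j \in B_k} v_j^2 = 2^{-m} \sum_{j \in B_k} v_j^2.
$$

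Finally, the index sets $\{1, \ldots, 2^m\}$ and $B_1, \ldots, B_m$ partition $\{1, \ldots, (m+2)2^{m-1}\}$, so summing yields
$$
\mathbb{E} |\langle Y, v \rangle|^2 = 2^{-m} \|v\|_{\ell^2}^2 = 2^{-m},
$$
using $\|v\|_{\ell^2} = 1$. There is no real obstacle here; the only point that requires care is invoking martingale difference orthogonality rather than expanding the square directly, which would otherwise leave cross terms $\mathbb{E}(\xi_k \xi_l v_{\beta_k} v_{\beta_l})$ that must be shown to vanish via the tower property — but this is exactly what the martingale interpretation of the preceding subsection buys us.
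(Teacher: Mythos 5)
Your proof is correct and takes essentially the same route as the paper. The paper expands $|\langle Y,v\rangle|^2$ as a double sum, kills the off-diagonal terms via the tower property, and then computes the diagonal terms using the equal-area property of dyadic rectangles at each scale; your invocation of the Pythagorean identity for martingale differences is just that same cross-term cancellation stated in slightly cleaner language, and your computation of $\mathbb{E}|Y_0|^2$ and $\mathbb{E}|D_k|^2$ matches the paper's term-by-term evaluation exactly.
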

\begin{proof}
Let $\beta_0,\ldots,\beta_m$ and $\xi_0,\ldots,\xi_m$ be as defined above such
that
$$
\mathbb{E} |\langle Y, v \rangle|^2 = \mathbb{E} \left| \sum_{k=0}^m \xi_k
v_{\beta_k} \right|^2 
= \sum_{k_1,k_2=0}^m \mathbb{E}  \xi_{k_1} \xi_{k_2} v_{\beta_{k_1}}
v_{\beta_{k_2}}.
$$
If $k_1 > k_2$, then $\xi_{k_2}$ and $\beta_{k_2}$ are determined
by $\beta_0,\xi_1,\ldots,\xi_{{k_1}-1}$; we conclude that
$$
\mathbb{E}  \left( \xi_{k_1} \xi_{k_2} v_{\beta_{k_1}} v_{\beta_{k_2}} \right) =
\mathbb{E} \left( \xi_{k_2}  v_{\beta_{k_2}} \mathbb{E} \left( \xi_{k_1}
v_{\beta_{k_1}} \big| \beta_0,\xi_1,\ldots,\xi_{k_1-1} \right) \right) = 0,
$$
where the finally equality follows from the fact that the expected value of
$\xi_{k_1} v_{\beta_{k_1}}$ conditional on $\beta_0,\xi_1,\ldots,\xi_{k_1-1}$ is
zero by the above described martingale property.  An identical argument holds
for the case when $k_1 < k_2$ so it follows that
$$
\sum_{k_1,k_2=0}^m \mathbb{E}  \xi_{k_1} \xi_{k_2} v_{\beta_{k_1}}
v_{\beta_{k_2}} = \sum_{k=0}^m \mathbb{E} \xi_{k}^2 v_{\beta_k}^2 =
\mathbb{E} v_{\beta_0}^2 + \frac{1}{2} \sum_{k=1}^m \mathbb{E} v_{\beta_k}^2.
$$
We can compute this expectation explicitly by noting that the probability that
$x$ is contained a given dyadic rectangle is proportional to its area;
specifically, we have
$$
\mathbb{E} v_{\beta_0}^2 + \frac{1}{2} \sum_{k=1}^m \mathbb{E} v_{\beta_k}^2 =
\frac{1}{2^m} \sum_{j=1}^{2^m} v_j^2 + \frac{1}{2} \frac{1}{2^{m-1}}
\sum_{k=1}^m \sum_{j=1}^{2^{m-1}} v_{(k+1)2^{m-1} + j}^2 = 2^{-m},
$$
where the final equality follows from collecting terms and using the assumption
that $v$ is a unit vector in $\mathbb{R}^{(m+2)2^{m-1}}$.
\end{proof}

Since embedding $\Psi : [0,1]^2 \rightarrow \mathbb{R}^{(m+2)2^{m-1}}$ is
defined using indicator functions of dyadic rectangles of area $2^{-m}$ in
$[0,1]^2$, it follows that $\Psi$ is constant on $2^{-m}$ by $2^{-m}$ dyadic
squares since all points in such a square are contained in the same collection
of dyadic rectangles of area $2^{-m}$ in $[0,1]^2$. This observation leads the
following corollary of Lemma \ref{mart}.

\begin{corollary} \label{corsing}
Let $x_1,\ldots,x_{2^{2m}}$ be a sequence of points such that each $2^{-m}$ by
$2^{-m}$ dyadic square contains exactly one point.  Let $A$ be the $2^{2m}
\times (m+2) 2^{m-1}$ matrix whose $j$-th row is $\Psi(x_j)$. Then,
$$
\sigma_1 = \cdots = \sigma_{(m+2)2^{m-1}} = 2^{m/2},
$$
where $\sigma_1,\ldots,\sigma_{(m+2)2^{m-1}}$ are the singular values of $A$.
\end{corollary}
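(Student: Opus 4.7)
The plan is to show $A^\top A = 2^m I$ on $\mathbb{R}^{(m+2)2^{m-1}}$, from which the claim that every singular value of $A$ equals $2^{m/2}$ follows immediately. Equivalently, I want to show that $\|Av\|_{\ell^2}^2 = 2^m$ for every unit vector $v \in \mathbb{R}^{(m+2)2^{m-1}}$.

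The key structural observation is the one already flagged just before the corollary: each coordinate $\Psi_j$ is the (signed) indicator of a dyadic rectangle of area $2^{-m}$, so each $\Psi_j$, and hence the whole map $\Psi$, is constant on every dyadic square of side length $2^{-m}$. Since the hypothesis puts exactly one sample point in each of the $2^{2m}$ such squares, the value $\Psi(x_k)$ is determined by the square containing $x_k$, not by its exact location. First I would unpack
\begin{equation*}
\|Av\|_{\ell^2}^2 = \sum_{k=1}^{2^{2m}} \langle \Psi(x_k), v \rangle^2
\end{equation*}
and then rewrite this sum as $2^{2m}$ times an average. Because the $2^{-m}\times 2^{-m}$ dyadic squares partition $[0,1]^2$ into $2^{2m}$ pieces of equal area $2^{-2m}$, and $\Psi$ is constant on each one, this average is exactly the expectation of $\langle \Psi(X),v\rangle^2$ for $X$ uniform on $[0,1]^2$:
\begin{equation*}
\|Av\|_{\ell^2}^2 = 2^{2m}\, \mathbb{E}\,|\langle \Psi(X), v \rangle|^2.
\end{equation*}

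Now I would invoke Lemma \ref{mart} with $Y = \Psi(X)$, which gives $\mathbb{E}|\langle Y, v\rangle|^2 = 2^{-m}$ for every unit vector $v$. Substituting yields $\|Av\|_{\ell^2}^2 = 2^{2m}\cdot 2^{-m} = 2^m$ for every unit $v$. Since $v$ was an arbitrary unit vector, $A^\top A = 2^m I$, so all $(m+2)2^{m-1}$ singular values of $A$ equal $2^{m/2}$, as claimed.

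There is no real obstacle here: the only thing to be careful about is justifying the identity between the discrete sum $\sum_k \langle \Psi(x_k), v\rangle^2$ and the continuous expectation $\mathbb{E}|\langle \Psi(X),v\rangle|^2$, which rests entirely on the fact that $\Psi$ is piecewise constant on the dyadic grid of side $2^{-m}$ and on the one-point-per-cell hypothesis. Once that reduction is in hand, the corollary is an immediate consequence of Lemma \ref{mart}.
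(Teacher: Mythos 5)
Your proposal is correct and follows essentially the same path as the paper: express $\|Av\|_{\ell^2}^2$ as a sum over the sample points, use the piecewise constancy of $\Psi$ on $2^{-m}\times 2^{-m}$ dyadic squares together with the one-point-per-cell hypothesis to identify that sum with $2^{2m}\,\mathbb{E}|\langle\Psi(X),v\rangle|^2$, and invoke Lemma~\ref{mart}. The only cosmetic difference is that you phrase the conclusion as $A^\top A = 2^m I$, which the paper leaves implicit.
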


\begin{proof}
Let $v \in \mathbb{R}^{(m+2)2^{m-1}}$ be an arbitrary unit vector. We have
$$
\|A v\|_{\ell^2}^2 = \sum_{j=1}^{2^{2m}} |\langle \Psi(x_j), v \rangle|^2.
$$
However, since all points in each $2^{-m}$ by $2^{-m}$ dyadic square have the
same embedding, and since the measures of all such dyadic squares are equal, we
have
$$
\sum_{j=1}^{2^{2m}} |\langle \Psi(x_j), v \rangle|^2 = 2^{2m} \mathbb{E} |
\langle Y, v \rangle|^2,
$$
where $Y := \Psi(X)$ for a point $X$  chosen uniformly at random from the unit
square. By Lemma \ref{mart} we conclude that
$$
\|A v \|_{\ell^2}^2 =  2^{2m} \mathbb{E} | \langle Y, v \rangle|^2 = 2^m,
$$
and since $v$ was an arbitrary unit vector the proof is complete. 
\end{proof}

\subsection{Approximation by linear functionals} \label{functional}
So far we have constructed an embedding $\Psi : [0,1]^2 \rightarrow
\mathbb{R}^{(m+2)2^{m-1}}$, and we have shown that inner products of the form
$\langle \Psi(X), v \rangle$ are related to martingales. We have used this
relation to show that the collection of all possible embedding vectors form a
matrix whose singular values are all $2^{m/2}$.  Next, we show that a mixed
H\"older function can be approximated by a linear functional in the embedding
coordinates.

\begin{lemma} \label{lem2}
Let $f : [0,1]^2 \rightarrow \mathbb{R}$ be a $(c,\alpha)$-mixed H\"older
function. Then, there exists a vector $w \in \mathbb{R}^{(m+2)2^{m-1}}$ such
that
$$
f(x) = \langle \Psi(x), w \rangle + \mathcal{O}(m 2^{-\alpha m}),
\quad \text{for all} \quad x \in [0,1]^2,
$$
where the vector $w$ depends on $f$, but is independent of $x$ and is explicitly
defined below in Definition \ref{def2}.
\end{lemma}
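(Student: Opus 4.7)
The plan is to expand $f(x,y)$ in the one-dimensional Haar basis in $x$ truncated at scale $2^{-m}$, for each fixed $y$, and then to approximate the resulting Haar coefficients---which are functions of $y$---by piecewise-constant functions of $y$ on dyadic slabs of lengths matched to the Smolyak sparse-grid balance. This naturally produces an approximation lying in the span of $\Psi$, and the mixed H\"older condition is the key ingredient for bounding the coefficient-approximation error.

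Concretely, for each $y$ I would write
\[
f(x,y) = \bar f(y) + \sum_{k=0}^{m-1} h_{I(x,k)}(x)\,A_{I(x,k)}(y) + R_m(x,y),
\]
where $\bar f(y) := \int_0^1 f(x,y)\,dx$, $I(x,k)$ is the dyadic $x$-interval of length $2^{-k}$ containing $x$, $h_I := \tfrac{1}{\sqrt{2}}(\chi_{I^+} - \chi_{I^-})$ is the Haar wavelet, $A_I(y)$ is its $L^2$-Haar coefficient in $x$ applied to $f(\cdot,y)$, and $R_m(x,y)$ is the truncation residual. A standard one-dimensional $\alpha$-H\"older argument in $x$ gives $|R_m(x,y)| \lesssim 2^{-\alpha m}$. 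I would then set $w_{R_j} := \bar f(y_m^{(j)})$, with $y_m^{(j)}$ the midpoint of the $y$-interval of $R_j$, and $w_{T_j} := A_{I_{T_j}}(y_{T_j})$, with $I_{T_j}$ and $y_{T_j}$ the $x$-interval of $T_j$ and the midpoint of its $y$-interval, respectively. A direct verification gives
\[
\langle \Psi(x,y),\,w\rangle = \bar f(y_m) + \sum_{k=0}^{m-1} h_{I(x,k)}(x)\,A_{I(x,k)}(y_{T_k}),
\]
where $T_k$ is the unique aspect-level-$k$ rectangle of area $2^{-m+1}$ containing $(x,y)$.

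The error $f(x,y) - \langle\Psi(x,y),w\rangle$ then decomposes into $R_m(x,y)$, the slab shift $\bar f(y) - \bar f(y_m)$, and the $m$ coefficient shifts $h_{I(x,k)}(x)[A_{I(x,k)}(y) - A_{I(x,k)}(y_{T_k})]$. The first two are each $\lesssim 2^{-\alpha m}$ by routine one-dimensional $y$-H\"older applied to $\bar f$. The main obstacle is bounding each of the $m$ coefficient shifts by $\mathcal{O}(2^{-\alpha m})$ uniformly in $k$: the $y$-interval of $T_k$ has length $2^{-(m-1-k)}$, which can be much larger than $2^{-m}$, so any bound on $A_I$ in $y$ using only one-directional H\"older continuity fails. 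Rewriting $A_I(y) - A_I(y')$ as a one-dimensional $x$-integral of a second-order difference of $f$ in $(x,y)$ and invoking the mixed H\"older inequality with $|x'-x| = |I| = 2^{-k}$ yields $|A_I(y) - A_I(y')| \lesssim 2^{-\alpha k}|y-y'|^\alpha$; substituting $|y - y_{T_k}| \le 2^{-(m-k)}$ gives the desired $\mathcal{O}(2^{-\alpha m})$ per term, and summing over $k$ produces the claimed $\mathcal{O}(m\,2^{-\alpha m})$ bound.
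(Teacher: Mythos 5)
Your proof is correct, but it takes a genuinely different route from the paper's. The paper does not re-derive any approximation estimate: it defines $w$ via telescoping sums $s_r(R)$ of function values at centers of dyadic rectangles (Definition~\ref{def2}), and then shows by a purely algebraic manipulation that $\langle \Psi(x), w\rangle$ collapses exactly to the Smolyak sum from Lemma~\ref{smolyak}, at which point the error bound is inherited for free. Your argument is instead self-contained: you expand $f(\cdot,y)$ in a (truncated) Haar series in $x$, observe that the column span of $\Psi$ is exactly a Haar-in-$x$/piecewise-constant-in-$y$ tensor space at sparse-grid resolution, and then bound the three error sources (the $x$-truncation residual, the $\bar f(y)\mapsto\bar f(y_m)$ slab shift, and the $m$ coefficient shifts $A_I(y)\mapsto A_I(y_{T_k})$) directly from the mixed H\"older condition. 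The key step---rewriting $A_I(y)-A_I(y')$ as an integral of a second-order mixed difference and invoking the cross-derivative H\"older estimate to get $|A_I(y)-A_I(y')|\lesssim |I|^{\alpha}|y-y'|^{\alpha}$, then matching $|I|=2^{-k}$ against $|y-y_{T_k}|\lesssim 2^{-(m-k)}$---is precisely where the sparse-grid balance enters and is the honest analogue of the paper's appeal to Smolyak. Two minor points: the vector you construct (built from row averages $\bar f(y_m)$ and integrated Haar coefficients $A_I(y_{T_k})$) is not the same as the $w$ of Definition~\ref{def2} (which uses point values at rectangle centers); the lemma only asserts existence, and your $w$ has $\|w\|_{\ell^\infty}\lesssim c$, so nothing downstream in the paper is affected, but the parenthetical ``explicitly defined below in Definition~\ref{def2}'' no longer describes your $w$. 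Also, the separation you plug into the mixed H\"older inequality should be $|I|/2$ rather than $|I|$, and you should make the normalization of $A_I$ consistent with your choice $h_I=\tfrac{1}{\sqrt2}(\chi_{I^+}-\chi_{I^-})$ (not the $L^2$-normalized Haar function); both are harmless constant-factor issues.
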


We construct the vector $w$ using a scheme similar to the construction
of Haar wavelets.  Let $\mathcal{D}_k^j$ be the collection of dyadic rectangles
contained in the unit square of width $2^{-k}$ and area $2^{-j}$. For a given
dyadic rectangle $R$, we define $s_r(R)$ by
$$
s_r(R) := \sum_{R^\prime \in \mathcal{D}_{r}^{m} : |R \cap R^\prime| > 0}
f(c_{R^\prime})
-
\sum_{R^\prime \in \mathcal{D}_{r}^{m-1} : |R \cap R^\prime| > 0}
f(c_{R^\prime}),
$$
where $c_{R^\prime}$ is the center of $R^\prime$. Observe that the first sum in
the definition of $s_r(R)$ is over the dyadic rectangles of width $2^{-r}$ and
area $2^{-m}$ that intersect $R$, while the second sum is over the dyadic
rectangles of width $2^{-r}$ and area $2^{-m+1}$ that intersect $R$. 

\begin{definition} \label{def2}
We define the vector $w \in
\mathbb{R}^{(m+2)2^{m-1}}$ entry-wise by
$$
w_j = \sum_{r=0}^m 2^{-r} s_r \left(R_j \right) \quad \text{for} \quad
j = 1,\ldots,2^m,
$$
and
$$
w_{2^m+j} = \frac{2^{k_j}}{\sqrt{2}}\sum_{r= k_j}^m 
2^{-r}\left( s_r \left( T_{j}^+ \right) - s_r\left( T_{j}^- \right) \right),
\quad \text{for} \quad j = 1,\ldots,m 2^{m-1},
$$
where $k_j :=\lfloor j 2^{-k} \rfloor$ is such that $2^{-k_j}$ is the width of
the rectangle $T_{j}$. 
\end{definition}

\begin{proof}[Proof of Lemma \ref{lem2}]
Let $x$ be a fixed point in the unit square $[0,1]^2$. Recall that we can
express the inner product
$$
\langle \Psi(x), w \rangle = \sum_{k=0}^m \xi_k w_{\beta_k},
$$
where the scalars $\xi_0,\ldots,\xi_m$ and the indicies $\beta_0,\ldots,\beta_m$
are as defined above.  First, let us rewrite Lemma \ref{smolyak} using this
notation. We have
$$
f(x) = f\left( c_{R_{\beta_0}} \right) + \sum_{k=1}^m f \left( c_{T_{\beta_k -
2^m}^{\sgn \xi_k}} \right) - \sum_{k=1}^m f \left( c_{T_{\beta_k - 2^m}} \right)
+ \mathcal{O}(m 2^{-\alpha m}).
$$
Indeed, by definition $R_{\beta_0}$ is the dyadic rectangle of width $1$ and
height $2^{-m}$ that contains $x$, $T_{\beta_k-2^m}^{\sgn \xi_k}$ is the dyadic
rectangle of width $2^{-k}$ and height $2^{m-k}$ that contains $x$, and
$T_{\beta_k-2^m}$ is the dyadic rectangle of width $2^{-k+1}$ and height
$2^{m-k}$ that contains $x$.  Thus, to complete the proof it suffices to show
that
$$
\sum_{k=0}^m \xi_k w_{\beta_k} = 
f\left( c_{R_{\beta_0}} \right) + \sum_{k=1}^m f \left( c_{T_{\beta_k -
2^m}^{\sgn \xi_k}} \right) - \sum_{k=1}^m f \left( c_{T_{\beta_k - 2^m}}
\right).  
$$
Let us start by considering the terms $\xi_0 w_{\beta_0},\ldots,\xi_m
w_{\beta_m}$ of the summation expression for the inner
product $\langle \Psi(x), w \rangle$. By the defintion of $w$ we have
$$
\xi_0 w_{\beta_0} = s_0(R_{\beta_0}) + \sum_{r=1}^m 2^{-r+1} \frac{
s_r(T_{\beta_1}^+) + s_r(T_{\beta_1}^-)}{2},
$$
and
$$
\xi_k w_{\beta_k} = \sum_{r= k}^m
2^{-r+k} \sgn \xi_k \frac{s_r \left( T_{\beta_k-2^m}^+ \right) - s_r\left(
T_{\beta_k-2^m}^- \right)}{2}.
$$
for $k = 1,\ldots,m$. We assert that if we start summing at $r = k +1$ we have
$$
\sum_{r=k+1}^m 2^{-r +k} s_r \left( T_{\beta_k-2^m}^{\sgn \xi_k} \right) =
\sum_{r=k+1}^m 2^{-r +k + 1} \frac{ s_r \left( T_{\beta_{k}-2^m}^{+} \right)+
s_r \left( T_{\beta_{k}-2^m}^{-} \right)}{2}.
$$
Indeed, observe that $T_{\beta_k-2^m}^{\sgn \xi_k}$ is the dyadic rectangle of
width $2^{-k}$ and height $2^{m-k}$ that contains $x$. We have that
$$
T_{\beta_k - 2^m}^{\sgn \xi_k} \subset T_{\beta_{k+1} - 2^m}^{+} \cup
T_{\beta_{k+1}-2^m}^- = T_{\beta_{k+1} - 2^m},
$$
since $T_{\beta_{k+1} - 2^m}$ is the dyadic rectangle of width $2^{-k}$ and
height $2^{k-m+1}$ that contains $x$. However, when $r \ge k+1$ we are summing
of dyadic rectangles of height at least $2^{m-k+1}$, and any dyadic rectangle of
height at least $2^{k-m+1}$ that intersects $T_{\beta_{k+1} - 2^m}$ must also
intersect $T_{\beta_{k}-2^m}^{\sgn \xi_k}$ so we conclude the above equality.
By applying the identity iteratively as we add each term $\xi_k w_{\beta_k}$ we
conclude that
$$
\sum_{k=0}^m \xi_k w_{\beta_k} = s_0(R_{\beta_0}) + \sum_{k=1}^m 
s_r \left(T_{\beta_k - 2^m}^{\sgn \xi_k} \right).
$$
Next, we observe that
$$
s_0(R_{\beta_0}) = f \left(c_{R_{\beta_0}} \right) - f \left( c_{T_{\beta_1 -
2^m}} \right),
$$
and that for $k = 1,\ldots,m-1$.
$$
s_k \left(T_{\beta_k - 2^m}^{\sgn \xi_k} \right) = 
f \left(c_{T_{\beta_k - 2^m}^{\sgn \xi_k}} \right) 
- f \left(c_{T_{\beta_{k+1} - 2^m}} \right).
$$
However, observe that when $r = m$ we have
$$
s_m(R) := \sum_{R^\prime \in \mathcal{D}_{m}^{m} : |R \cap R^\prime| > 0}
f(c_{R^\prime})
-
\sum_{R^\prime \in \mathcal{D}_{m}^{m-1} : |R \cap R^\prime| > 0}
f(c_{R^\prime}),
$$
and there are no rectangles in the set $\mathcal{D}_m^{m-1}$, which
is the set of rectangles of width $2^{-m}$ and area $2^{-m+1}$ that are
contained in the unit square; indeed, such a rectangle would need to have height
$2$, which is prohibitive.  We conclude that
$$
s_m\left( T_{\beta_m - 2^m}^{\sgn \xi_m} \right) = f \left( c_{T_{\beta_m -
2^m}^{\sgn \xi_m}} \right),
$$
Recall that we have already shown that
$$
\sum_{k=0}^m \xi_k w_{\beta_k} = s_0(R_{\beta_0}) + \sum_{k=1}^m 
s_k \left(T_{\beta_k -2^m}^{\sgn \xi_k} \right);
$$
substituting in the derived expressions for $s_0(R_{\beta_0})$ and
$s_k(T_{\beta_k}^{\sgn \xi_k})$ gives
$$
\sum_{k=0}^m \xi_k w_{\beta_k} = f\left( c_{R_{\beta_0}} \right) + \sum_{k=1}^m
f \left( c_{T_{\beta_k - 2^m}^{\sgn \xi_k}} \right) - \sum_{k=1}^m f \left(
c_{T_{\beta_k - 2^m}} \right),
$$
which completes the proof.
\end{proof}

\subsection{Random projections} \label{random}
We have established that in the embedding coordinates $\Psi(x)$ of a point $x
\in [0,1]^2$ that Smolyak's Lemma can be rephrased as a result about
approximating mixed H\"older functions by linear functionals. Moreover, using
the martingale interpretation of inner products of vectors with $\Psi(x)$ we
were able to explicitly compute the singular values of the matrix of all
possible embedding vectors. In the following we combine these ideas using the
randomized Kaczmarz algorithm of Strohmer and Vershynin
\cite{StrohmerVershynin2009}.

Suppose that $x_1,\ldots,x_{2^{2m}}$ is a sequence of points that contains
exactly one point in each $2^{-m}$ by $2^{-m}$ dyadic square in $[0,1]^2$. Let
$A$ be the $2^{2m} \times (m+2)2^{m-1}$ dimensional matrix whose $j$-th row is
$\Psi(x_j)$. Since the embedding $\Psi(x)$ has $1$ entry of magnitude $1$ and
$m$ entries of magnitude $1/\sqrt{2}$, see Definition \ref{def1}, we have
$$
\|\Psi(x) \|_{\ell^2} = \sqrt{1+m/2},
$$
for all $x \in [0,1]^2$, and it follows that all of the rows of $A$ have equal
magnitude. Suppose that $f : [0,1]^2 \rightarrow \mathbb{R}$ is a
$(c,\alpha)$-mixed H\"older function.  By Lemma \ref{lem2}, there exists a
vector $w$ such that
$$
| f(x) - \langle \Psi(x), w \rangle | \lesssim m 2^{-\alpha m}.
$$
Define
$$
\bar{f}(x) := \langle \Psi(x), w \rangle,
$$
for all $x \in [0,1]^2$.  If $b$ is the $2^{2m}$-dimensional vector whose $j$-th
entry is $\bar{f}(x_j)$, then we have a consistent linear system of equations
$$
A w = b.
$$
By Corollary \ref{corsing} the condition number $\kappa^2$ of $A$ satisfies
$$
\kappa^2 := \sum_{j=1}^{(m+2)2^{m-1}} \sigma_j^2/\sigma_{(m+2)2^{m-1}}^2 =
(m+2) 2^{m-1},
$$
where $\sigma_1,\ldots,\sigma_{(m+2)2^{m-1}}$ are the singular values of $A$.
Observe that sampling points uniformly at random from $[0,1]^2$ and applying the
embedding $\Psi$ is equivalent to choosing rows uniformly at random from $A$.
Thus, the following result is a direct consequence of applying Lemma
\ref{randomkaczmarz} to the consistent linear system of equations $A w = b$ that
we constructed above.

\begin{lemma} \label{lemrand}
Suppose that  $l$ points $X_1,\ldots,X_l$ are sampled uniformly at
random from $[0,1]^2$. Given an initial vector $v_0 \in
\mathbb{R}^{(m+2)2^{m-1}}$, define
$$
v_k := v_{k-1} + \frac{\bar{f}(X_k) - \langle \Psi(X_k), v_{k-1} \rangle}{1+m/2}
\Psi(X_k).
$$
Then,
$$
\mathbb{E} \|v_k - w\|^2_{\ell^2} \le \left(1 - \frac{1}{(m+2) 2^{m-1}}
\right)^k \|v_0 -w \|^2_{\ell^2},
$$
for $k = 1,\ldots,l$.
\end{lemma}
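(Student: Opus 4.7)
The plan is to observe that Lemma \ref{lemrand} is essentially an immediate application of the Strohmer--Vershynin bound (Lemma \ref{randomkaczmarz}) to the matrix $A$ and linear system $Aw = b$ constructed in \S\ref{random}; the only work is to verify that each hypothesis of Lemma \ref{randomkaczmarz} matches an object already built in the preceding subsections.

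First I would check the equal-row-norms hypothesis. By Definition \ref{def1}, every embedding $\Psi(x)$ has one coordinate equal to $1$ and exactly $m$ coordinates equal to $\pm 1/\sqrt{2}$, so $\|\Psi(x)\|_{\ell^2}^2 = 1 + m/2$ for all $x$. Substituting this common squared norm into the general Kaczmarz update rule $v_k = v_{k-1} + (b_{I_k} - \langle a_{I_k}, v_{k-1}\rangle)\|a_{I_k}\|_{\ell^2}^{-2} a_{I_k}$ reproduces the update written in the statement of Lemma \ref{lemrand}, with $a_{I_k} = \Psi(X_k)$ and $b_{I_k} = \bar{f}(X_k)$.

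Second, I would justify the equivalence between drawing $X_k$ uniformly from $[0,1]^2$ and drawing a row index $I_k$ uniformly from $\{1,\ldots,2^{2m}\}$. The matrix $A$ was defined using one representative point from each of the $2^{2m}$ dyadic squares of side length $2^{-m}$, and $\Psi$ is constant on every such square because it is built entirely from indicator functions of dyadic rectangles of area at least $2^{-m}$. Since these squares form an equal-measure partition of $[0,1]^2$, the random row $\Psi(X_k)$ has exactly the uniform distribution over the rows of $A$. Consistency of $Aw = b$ follows from the very definition of $b$ as $b_j = \bar{f}(x_j) = \langle \Psi(x_j), w\rangle$.

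With these hypotheses in place, I would plug in the condition number computed in \S\ref{random}: by Corollary \ref{corsing} all singular values of $A$ equal $2^{m/2}$, giving
$$
\kappa^2 = \sum_{j=1}^{(m+2)2^{m-1}} \sigma_j^2/\sigma_{(m+2)2^{m-1}}^2 = (m+2)2^{m-1}.
$$
Lemma \ref{randomkaczmarz} then yields the stated bound $\mathbb{E}\|v_k - w\|_{\ell^2}^2 \le (1 - ((m+2)2^{m-1})^{-1})^k\|v_0 - w\|_{\ell^2}^2$ for $k = 1,\ldots,l$. There is no real obstacle: every ingredient (embedding, consistent system, uniform row sampling, condition number) has already been prepared in \S\ref{embed}--\S\ref{functional}, and the proof reduces to a bookkeeping verification that the hypotheses of Lemma \ref{randomkaczmarz} are satisfied with the stated parameters.
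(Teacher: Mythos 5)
Your proof is correct and follows essentially the same route as the paper: both verify the equal-row-norm hypothesis via $\|\Psi(x)\|_{\ell^2}^2 = 1 + m/2$, identify uniform sampling of $X_k$ with uniform row sampling of $A$ (because $\Psi$ is constant on each $2^{-m}\times 2^{-m}$ dyadic square and these squares partition $[0,1]^2$ into equal-measure cells), observe that $Aw=b$ is consistent by construction, compute $\kappa^2 = (m+2)2^{m-1}$ from Corollary~\ref{corsing}, and then invoke Lemma~\ref{randomkaczmarz}. The only cosmetic difference is that you spell out the dyadic-square argument for uniform row sampling, which the paper merely asserts.
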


Note that $v_k$ is defined using $\bar{f}(x)$ rather than $f(x)$ so that the
definition of $v_k$ corresponds to running the randomized Kaczmarz
algorithm on the consistent linear system $A w = b$.  When we complete the proof
of Theorem \ref{thm1} in the following section, we estimate the error caused by
replacing $\bar{f}(x)$ by $f(x) = \bar{f}(x) + \mathcal{O}(m 2^{-\alpha m})$. We
remark that the expected error for the randomized Kaczmarz algorithm for
inconsistent linear systems is analyzed by Needell \cite{Needell2010}. Since we
need an error estimate that holds with high probability we perform a modified
version of the error analysis of Needell.

\subsection{Proof of Theorem \ref{thm1}} \label{complete} In this section, we
combine the developed tools to complete the proof of Theorem \ref{thm1}.
\begin{proof}[Proof of Theorem \ref{thm1}]
Suppose that $f : [0,1]^2 \rightarrow \mathbb{R}$ is a $(c,\alpha)$-mixed
H\"older function that is sampled at $l$ points $X_1,\ldots,X_l$ chosen
uniformly at random from $[0,1]^2$. For some initial vector $v_0^* \in
\mathbb{R}^{(m+2)2^{m-1}}$, define
$$
v_k^* := v_{k-1}^* + \frac{f(X_k) - \langle \Psi(X_k), v_{k-1}^* \rangle}{1+m/2}
\Psi(X_k),
$$
for $k = 1,\ldots,l$. Recall that  by Lemma \ref{lem2} there exists a vector
$w$ such that 
$$
|f(x) - \langle \Psi(x), w \rangle | \lesssim m 2^{-\alpha m},
$$
and recall that $\bar{f}(x) := \langle \Psi(x), w \rangle$. Suppose that
$\varepsilon_k := f(X_k) - \bar{f}(X_k)$. We can write
$$
v_k^* = v_k + e_k,
$$
where $v_k$ is the vector defined in Lemma \ref{lemrand} by
$$
v_k := v_{k-1} + \frac{\bar{f}(X_k) - \langle \Psi(X_k), v_{k-1} \rangle}{1+m/2}
\Psi(X_k),
$$
and $e_k$ is an error term defined by
$$
e_k := e_{k-1} + \frac{\varepsilon_k - \langle \Psi(X_k), e_{k-1}
\rangle}{1+m/2} \Psi(X_k).
$$
By orthogonality we have
$$
\|e_k\|_{\ell^2}^2 = \left\|e_{k-1} - \frac{\langle \Psi(X_k),e_{k-1}
\rangle}{1+m/2} \Psi(X_k) \right\|^2_{\ell^2} +
\frac{\varepsilon_k^2}{(1+m/2)^2} \| \Psi(X_k)\|^2_{\ell^2}.
$$
It follows that
$$
\|e_k\|_{\ell^2}^2 \le \|e_{k-1}\|_{\ell^2}^2 + \frac{\varepsilon_k^2}{1+m/2}
\lesssim k m 2^{-2 \alpha m}.
$$
By the triangle inequality we have
$$
\|v^*_k - w\|_{\ell^2} \lesssim \|v_k - w\|_{\ell^2} + \sqrt{k m} 2^{-\alpha m}.
$$
Next, we estimate $\|v_k - w\|_{\ell^2}$. From Lemma \ref{lemrand} we have
$$
\mathbb{E} \|v_k - w\|^2_{\ell^2} \le \left(1 - \frac{1}{(m+2) 2^{m-1}}
\right)^k \|v_0 -w \|^2_{\ell^2}.
$$
Thus, if $l \ge c_1 \log(2^m) (m+2) 2^{m-1}$, then we have
$$
\mathbb{E} \|v_l - w\|^2_{\ell^2} \le 2^{-c_1 m}  \|v_0 -
w\|_{\ell^2}^2.
$$
By the possibility of considering the function $f - f(X_1)$ instead of $f$, we
may assume that $|f| \le 2c$ on $[0,1]^2$. It follows that $\|w\|_{\ell^\infty}
\le 3c$ when $m$ is large enough. Therefore, if we initialize $v_0$ as the zero
vector
we have
$$
\|v_0 - w\|_{\ell^2}^2 \le 9c^2 (m+2) 2^{m-1}.
$$
From this estimate and our above analysis it follows that
$$
\mathbb{E} \|v_l - w\|^2_{\ell^2} \le \frac{9 c^2}{2} (m+2) 2^{(1-c_1) m},
$$
when $l \ge c_1 \log(2^m)  (m+2) 2^{m-1}$.  Observe that
$$
l = c_1 \log(2^m) (m+2) 2^{m-1} \le c_1 \log^2(2^m) 2^m,
$$
when $m$ is sufficiently large.  Thus, if $l \ge c_1 \log^2(2^m) 2^m$, then by
Markov's inequality
$$
\mathbb{P}(\|v_l - w\|_{\ell^2}^2 \ge m^3 2^{(1-2 \alpha) m}) \le
\frac{\mathbb{E} \|v_l-w\|_{\ell^2}^2}{m^3 2^{(1-2\alpha)m}} \le 2^{(2-c_1) m},
$$
when $m$ is large enough in terms of $c$.  Recall that we previously showed that
$$
\|v_l^* - w\|_{\ell^2} \lesssim \|v_l - w\|_{\ell^2} + \sqrt{m l} 2^{-\alpha m}.
$$
If $l = \lceil c_1 \log(2^m)^2 2^m \rceil$, then by our estimate on
$\|v_l-w\|_{\ell^2}$ it follows that
$$
\|v_l^* - w \|_{\ell^2}  \lesssim m^{3/2} 2^{(1/2-\alpha)m},
$$
with probability at least $1 - 2^{(2-c_1)m}$. By Corollary \ref{corsing},
the operator norm of $A$ is $2^{m/2}$, so it follows that
$$
\| A v_l^* - A w\|_{\ell^2} \lesssim m^{3/2} 2^{(1-\alpha)m},
$$
with probability at least $1 - 2^{(2-c_1)m}$.  Thus, if we define the function
$\tilde{f} : [0,1]^2 \rightarrow \mathbb{R}$ by
$$
\tilde{f}(x) := \langle \Psi(x), v_l^* \rangle,
$$
then we have the estimate
$$
\sqrt{\int_{[0,1]^2} |\tilde{f}(x) - \bar{f}(x)|^2 dx} = \|\tilde{f} -
\bar{f}\|_{L^2} \lesssim 2^{-\alpha m} m^{3/2}, 
$$
with probability at least $1 - 2^{(2 - c_1)m}$.  Since $\|\bar{f} - f\|_{L^2}
\lesssim 2^{-\alpha m} m$ it follows that $\|\tilde{f} - f\|_{L^2} \lesssim
2^{-\alpha m} m^{3/2}$. Setting $n :=2^m$ completes the proof.
\end{proof}

\subsection{Proof of Remark \ref{rmk1}} \label{proofrmk}
It remains to verify the computational cost claims of Remark \ref{rmk1}.
\begin{proof}[Proof of Remark \ref{rmk1}]
Let $n = 2^m$.  The computation of $v_l^*$ described in Lemma \ref{lemrand}
consists of $\mathcal{O}(n \log^2 n)$ iterations of $\mathcal{O}(\log n)$
operations for a total of $\mathcal{O}(n \log^3 n)$ operations of
pre-computation. Then, since $\Psi(x)$ is supported on $\mathcal{O}(\log n)$
entries, the inner product $\langle \Psi(x), v_l^* \rangle$ requires
$\mathcal{O}(\log n)$ operations. Finally, approximating the integral
of $f$ amounts to approximating the function $f$ at each point, taking the
sum, and dividing by $n^2$:
$$
\left| \int_{[0,1]^2} f(x) - \frac{1}{n^2} \left\langle \sum_{j=1}^{n^2}
\Psi(x_j), w \right\rangle  \right| \lesssim n^{-\alpha} \log^{3/2} n.
$$
However, this naive approach would require $\mathcal{O}(n^2)$ operations.
Instead, we make the observation that
$$
\sum_{j=1}^{n^2} \Psi(x_j) = g,
$$
where $g$ is the vector whose first $n$ entries are equal to $1$ and which is
zero elsewhere. Indeed, after the first $n$ entries, each entry is $+1/\sqrt{2}$
and $-1/\sqrt{2}$ for an equal number of embedding vectors. It follows that 
$$
\left| \int_{[0,1]^2} f(x) - \frac{1}{n} \left\langle g, w \right\rangle
\right| \lesssim n^{-\alpha} \log^{3/2} n;
$$
the computation of the inner product $\langle g,w \rangle$ only requires
$\mathcal{O}(n)$ operations so the proof is complete.
\end{proof}

\section{Discussion} \label{discussion}

\subsection{Illustration} \label{example}
Suppose that $f : [0,1]^2 \rightarrow \mathbb{R}$ is
the function defined by
$$
f(x,y) = \sin(20 x^2 + 10 y) \sin(\pi x) \sin(\pi y), \quad \text{for} \quad
(x,y) \in [0,1]^2.
$$
The function $f$ is $(c,1)$-mixed H\"older for some $c > 0$ since the partial
derivatives $\partial f/\partial x$, $\partial f/\partial y$, and $\partial^2 f
/ (\partial x \partial y)$ are bounded in $[0,1]^2$. As a baseline, in Figure
\ref{fig2} we plot the function $f$, and the approximation of $f$ via the method
of Smolyak (Lemma \ref{smolyak}) with $m = 7$ such that $n := 2^m = 128$.

\begin{figure}[h!]
\begin{tabular}{cc}
\includegraphics[width=.45\textwidth]{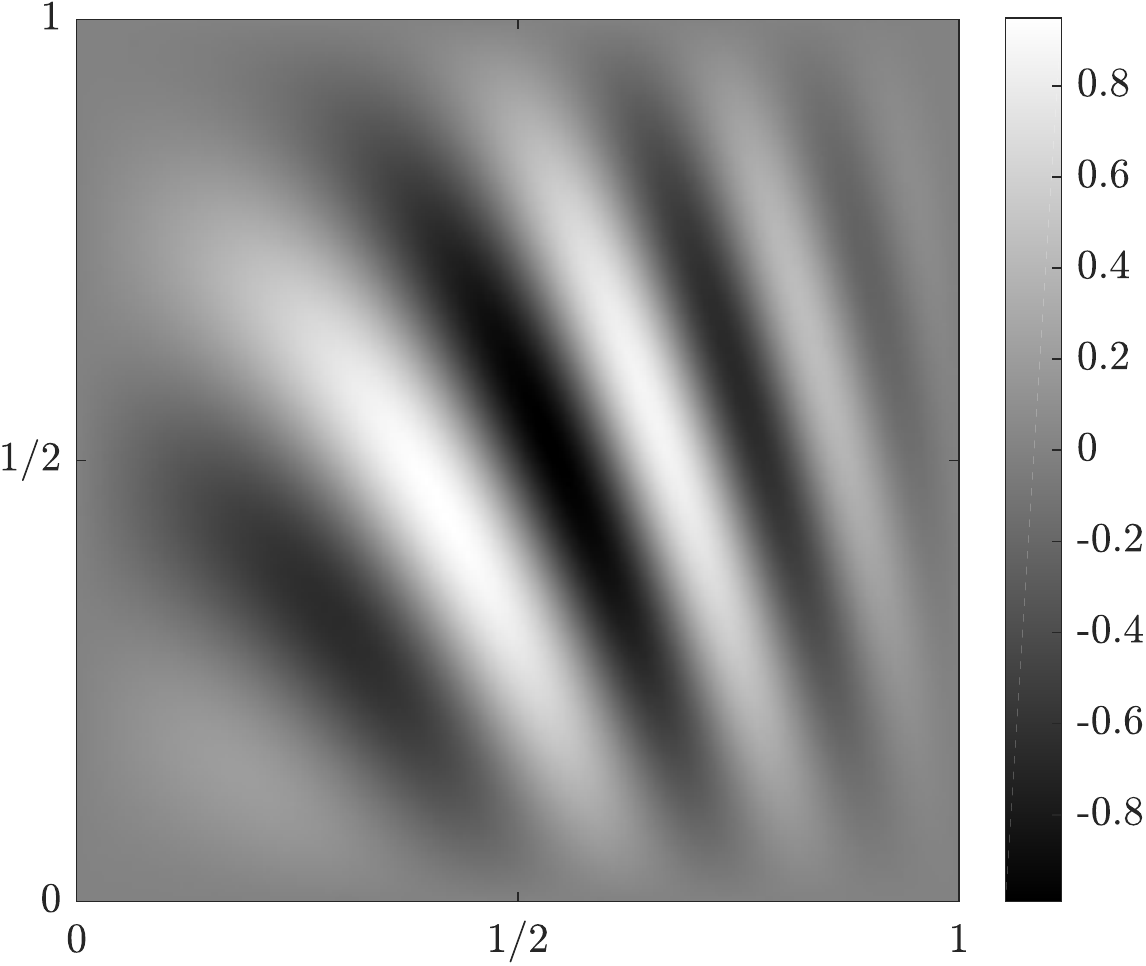} &
\includegraphics[width=.45\textwidth]{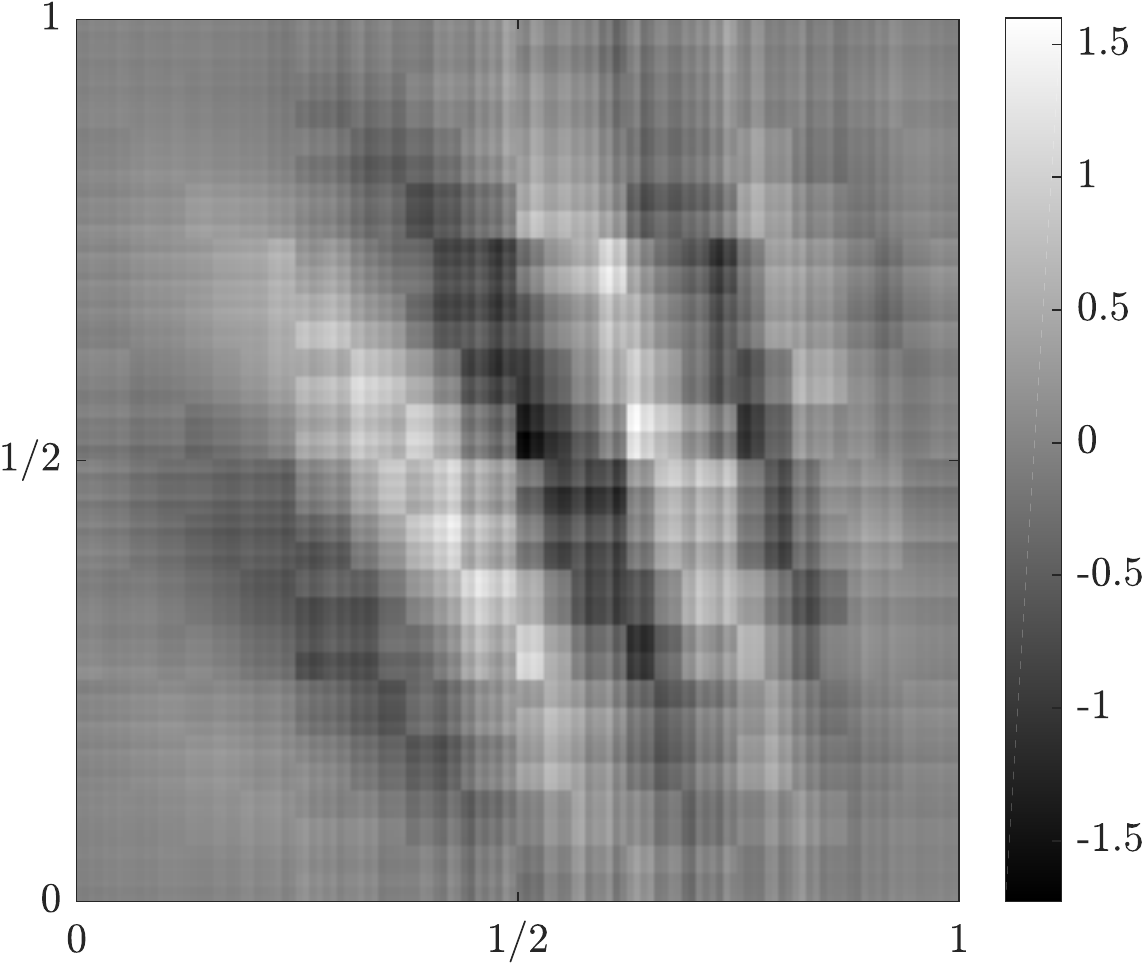} \\
\end{tabular}
\caption{Function $f$ (left) and approximation via Lemma \ref{smolyak} (right).}
\label{fig2}
\end{figure}

Next, we set $c_1 = 8$ and sample $l = c_1 n \log^2 n$ points uniformly at
random from $[0,1]^2$. In Figure \ref{fig3}, we plot the approximation of $f$
via Theorem \ref{thm1}, and the approximation of $f$ via Theorem \ref{thm1} with
$n=128$ spin cycles. In particular, the spin cycles are performed by considering
$f$ as a function on the torus, generating a sequence of random shifts
$\zeta_1,\ldots,\zeta_{n} \in [0,1]^2$, and using the method of Remark
\ref{spin}.

\begin{figure}[h!]
\begin{tabular}{cc}
\includegraphics[width=.45\textwidth]{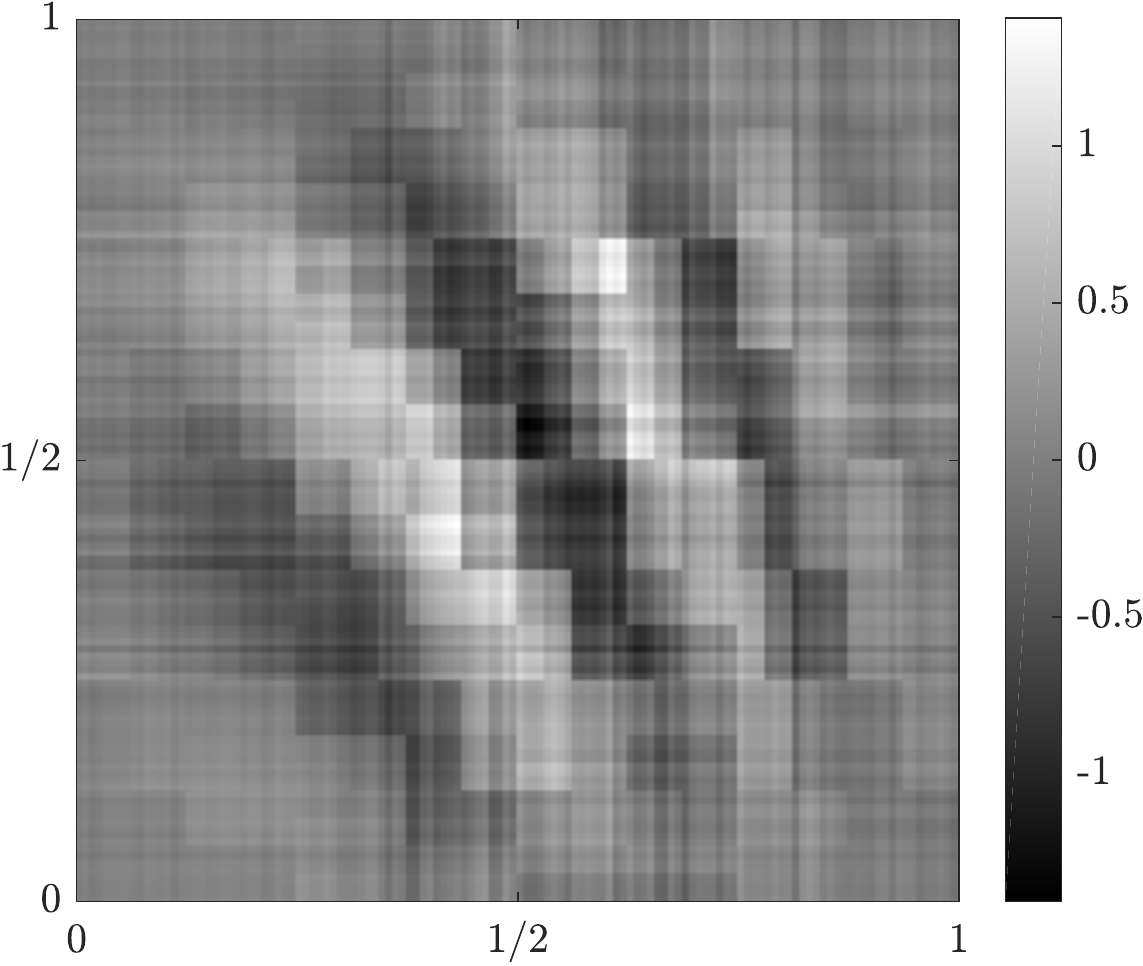} &
\includegraphics[width=.45\textwidth]{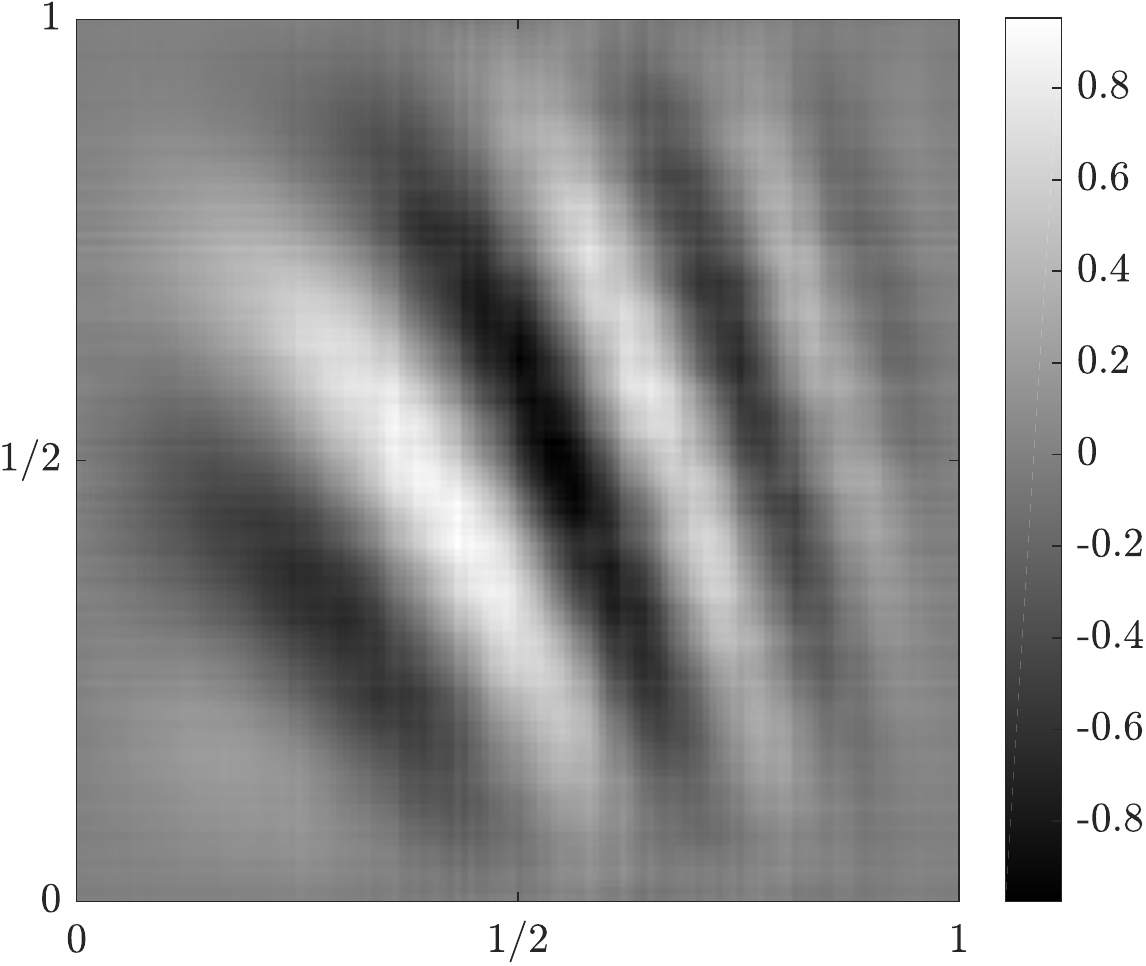}
\end{tabular}
\caption{Approximation via Theorem \ref{thm1} (left) and approximation via
Theorem \ref{thm1} with $128$ spin cycles as in Remark \ref{spin}
(right).} \label{fig3}
\end{figure}

The plots in Figure \ref{fig3} provide empirical evidence that
spin cycling as described in Remark \ref{spin} reduces
artifacts.  Developing quantitative estimates for improvements in approximation
accuracy resulting from spin cycling is an interesting theoretical problem for
future study.

\subsection{Discussion} 
There are several possible extensions and applications of Theorem \ref{thm1}.
Informally speaking, we have shown that in $2$-dimensions the sampling
requirements for the method of Smolyak \cite{Smolyak1963} can be relaxed from a
specific set of points at the center of dyadic rectangles to a similar number of
random samples.  As previously noted, Smolyak \cite{Smolyak1963} presented a
general $d$-dimensional version of Lemma \ref{smolyak} so an immediate question
for future study is the extension of Theorem \ref{thm1} to the $d$-dimensional
cube.  This would require defining a more sophisticated embedding $\Psi$ that
retains an analog of the martingale property established in \S \ref{martingale}.
It may also be interesting to consider generalizations of Theorem \ref{thm1}
to abstract dyadic trees as discussed by M.~Gavish and R.~R.~Coifman in
\cite{GavishCoifman2012}.

There are also interesting theoretical questions in $2$-dimensions related to
random matrix theory. Given a collection of $l$ points $X_1,\ldots,X_l$ chosen
uniformly at random from $[0,1]^2$, we can consider the $l \times (m+2)2^{m-1}$
dimensional matrix $B$ whose $j$-th row is $\Psi(X_j)$, where $\Psi$ is the
embedding defined in Definition \ref{def1}. The rows of $B$ are
independent, and the inner product of a vector with a row of $B$ is a martingale
sum, see \S \ref{martingale}. It would be interesting to develop quantitative
high probability estimates on the singular values of $B$.

Finally, we note that the method of Smolyak \cite{Smolyak1963} has been
developed into a computational method called sparse grids, see
\cite{BungartzGriebel2004}. The relaxation to random sampling and the ability to
perform spin cycles may prove useful for certain applications.  In particular,
it may be interesting to consider applications of Theorem \ref{thm1} in the
Fourier domain,  where the mixed H\"older condition is very natural. Recently, M.~Griebel and J.~Hamaekers \cite{GriebelHamaekers2014} have developed
a fast discrete Fourier transform on sparse grids, which could potentially be
used in combination with Theorem \ref{thm1}.

\subsection*{Acknowledgements}
The author would like to thank Raphy Coifman for many fruitful discussions.

\end{document}